\theoremstyle{plain}
\newtheorem{theorem}{Theorem}
\newtheorem{lemma}{Lemma}
\newtheorem{corollary}{Corollary}[theorem]
\theoremstyle{definition}
\newtheorem{definition}{Definition}
\newtheorem{note}{Note}
\newtheorem{example}{Example}
\begin{document}


\author{Monakhov~V.\,S., Sokhor~I.\,L. }

\title{On strictly $2$-maximal subgroups of finite groups}

\date{}

\maketitle

\section{Introduction}

All groups in this paper are finite.
We write $H\leq G$ ($H< G$) if $H$ is
a (proper) subgroup of a group $G$.
A subgroup~$M$ of a group~$G$ is called a maximal subgroup
if~$M<G$ and $M\le H\le G$ implies that either~$M=H$ or~$H=G$.
If $M$ is a maximal subgroup of a group~$G$,
then we write $M\lessdot \,G$.

Let $G$ be a group and let $H$ be a subgroup of $G$.
We use the following notation
\[
\mbox {Max} (G,H)=\{M\lessdot \,G\mid H\le M\}.
\]
If $H=1$ is the unit subgroup of~$G$,
then we write $\mbox {Max} (G)$ instead of $\mbox {Max} (G,1)$.
It is clear that $\mbox {Max} (G)$ is the set of all maximal subgroups of~$G$.
It should be noted that $\mbox {Max} (G)=\varnothing$ exactly when~$G=1$.

\begin{definition}
A subgroup $H$ of a group~$G$ is called

a {\sl $2$\nobreakdash-\hspace{0pt}maximal subgroup} of~$G$
if there is $M\in \mbox {Max} (G,H)$ such that $H\lessdot \,M$;

an {\sl $n$-maximal subgroup} of~$G$ for $n\ge 3$
if there is $M\in \mbox {Max} (G,H)$ such that $H$ is
an $(n-1)$-maximal subgroup in~$M$.
\end{definition}

\begin{example}
In~$L_2(8)$ ~\textup{\cite[IdGroup(504,156)]{gap}},
a subgroup~$C_2$ is a $2$-, $3$- and $4$-maximal subgroup:

$C_2\lessdot \, D_{14}\lessdot \, L_2(8)$,

$C_2\lessdot \, S_3\lessdot \,D_{18}\lessdot \,L_2(8)$,

$C_2\lessdot \, C_2^2\lessdot \,C_2^3\lessdot \,C_2^3:C_7 \lessdot \, L_2(8)$.
\end{example}

In view of~\cite[example 3]{mkRic}, for any~$n>2$
there is a group in which a $2$-maximal subgroup
is $n$-maximal.

\begin{definition}
A subgroup $H$ of a group~$G$ is called
a {\sl strictly $2$\nobreakdash-\hspace{0pt}maximal subgroup} of~$G$
if $H\lessdot \,M$ for all $M\in \mbox {Max} (G,H)$.
Clearly, a strictly 2-maximal  subgroup of a group~$G$
is 2-maximal in~$G$ and is not $n$-maximal in~$G$ for any~$n>2$.
\end{definition}

By $\mbox {Max}_2(G)$ we denote the set
of all 2-maximal subgroups of a group~$G$,
$\mbox {Max}^\star_2(G)$ denotes the set of all
strictly 2-maximal subgroups of~$G$.
It is clear that $\mbox {Max}_2(G)=\varnothing$ exactly
when~$G=1$ or~$|G|$ is a prime. From the indices lemma,
it follows that a 2-maximal subgroup of least index
is strictly 2-maximal, see Lemma~\ref{l1}.
Therefore $\mbox {Max}_2^\star(G)\ne \varnothing$ for
any $G\neq 1$ of nonprime order.

The first author of this paper proper the following problem~\cite[19.54]{kour}

{\sl What are the chief factors of a finite group in which every
2-maximal subgroup is not $n$-maximal for any $n\ge 3$?}

This problem is researched in~\cite{mg}.

If every $2$-maximal subgroup of a group~$G$ is not
$n$-maximal for all~$n\ge 3$, then
$\mbox {Max}_2(G)=\mbox {Max}_2^\star (G)$,
i.\,e. every $2$-maximal subgroup of~$G$ is strictly 2-maximal.
Hence the noted problem could be formulated  as follows.

{\sl What are the chief factors of a finite group in which
$\mbox {Max}_2(G)=\mbox {Max}_2^\star (G)$?}

The examples of groups with $\mbox {Max}_2(G)=\mbox {Max}_2^\star (G)$
are supersoluble groups,
the nonsupersoluble group $C_3^2: C_8$ \cite[remak 4]{mg}, 
the group~$U_3(2)$, the simple groups~$U_3(3)$ and~$L_2(17)$,
see examples~\ref{ef9}--\ref{eu33}.

In this paper,  we conclude from the results of
Hanguang Meng and Xiuyun Guo~\cite{mg} some corollaries
about the existence of strictly $2$-maximal subgroups in groups.
We give examples of groups that illustrate properties of strictly $2$-maximal subgroups.

\section{On groups with $\mbox {Max}_2(G)=\mbox {Max}_2^\star(G)$}

\begin{lemma}\label{l1}
If $G\neq 1$ is a group of nonprime order,
then $\mbox {Max}_2^\star(G)\ne \varnothing$.
\end{lemma}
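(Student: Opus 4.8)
The plan is to exhibit an explicit member of $\mbox{Max}_2^\star(G)$, namely a $2$-maximal subgroup of smallest possible index, and to verify strict $2$-maximality directly from the multiplicativity of the index (the ``indices lemma'' alluded to in the text). So the whole argument is a minimality argument dressed up around the tower law $|G:A|=|G:B|\,|B:A|$.

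First I would check that the candidate set is nonempty. Since $G\neq 1$ and $|G|$ is not prime, the trivial subgroup cannot be maximal in $G$ (if $1\lessdot G$ then $G$ has no proper nontrivial subgroups and hence prime order), so every maximal subgroup $M\lessdot G$ is nontrivial. Being a nontrivial finite group, such $M$ itself has a maximal subgroup $H$, and then $H\lessdot M\lessdot G$ witnesses $H\in\mbox{Max}_2(G)$; thus $\mbox{Max}_2(G)\neq\varnothing$. Among all members of $\mbox{Max}_2(G)$ I would then choose one, say $H$, for which the index $|G:H|$ is least.

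The core step is to prove that this $H$ is strictly $2$-maximal, i.e. $H\lessdot M$ for every $M\in\mbox{Max}(G,H)$. Fix such an $M$. First note $H\neq M$: being $2$-maximal, $H$ lies strictly below some maximal subgroup of $G$, so $H$ is not itself maximal in $G$; hence $H<M$. Suppose, for contradiction, that $H$ is not maximal in $M$, so there is $K$ with $H<K<M$. In the finite group $M$ I can complete $K$ to a maximal subgroup, obtaining $K^\ast$ with $K\le K^\ast\lessdot M$. Then $K^\ast\lessdot M\lessdot G$, so $K^\ast\in\mbox{Max}_2(G)$. By the indices lemma $|G:H|=|G:K^\ast|\,|K^\ast:H|$, and since $H<K\le K^\ast$ the factor $|K^\ast:H|$ exceeds $1$, forcing $|G:K^\ast|<|G:H|$. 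This contradicts the minimality of $|G:H|$ among $2$-maximal subgroups. Hence $H\lessdot M$, and as $M\in\mbox{Max}(G,H)$ was arbitrary, $H\in\mbox{Max}_2^\star(G)$.

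I do not expect a deep structural obstacle here: the real content is just the multiplicativity of the index, and everything else is boundary bookkeeping. The points that must be handled carefully are that $H$ is genuinely proper in each containing maximal subgroup $M$ (so that the notion $H\lessdot M$ is even in play) and that the interpolated subgroup $K$ can always be enlarged to a maximal subgroup $K^\ast$ of $M$, which is automatic in a finite group. The only thing to get right is organising the minimality so that the strict inequality $|G:K^\ast|<|G:H|$ is actually forced, which is exactly where the hypothesis of least index is consumed.
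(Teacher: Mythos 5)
Your argument is correct and is essentially the paper's own proof: choose a $2$-maximal subgroup $H$ of least index, and use multiplicativity of the index to show that any failure of maximality of $H$ in some $M\in\mbox{Max}(G,H)$ would produce a $2$-maximal subgroup of strictly smaller index. The extra bookkeeping you supply (nonemptiness of $\mbox{Max}_2(G)$, the strictness $H<M$, and completing $K$ to a maximal subgroup of $M$) is implicit in the paper's version but changes nothing of substance.
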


\begin{proof}
Let $H$ be a $2$-maximal subgroup in $G$ of least index.
Suppose that $H$ is not a strictly $2$-maximal subgroup.
Then there  is $M\in \mbox {Max} (G,H)$ such that
$H$ is not a maximal subgroup in~$M$. So, in~$M$ there is
a subgroup~$K$ such that $H<K\lessdot \,M$.
By the indices lemma,
\[
|G:H|=|G:K||K:H|, \ |K:H|\ne 1, \ |G:K|<|G:H|.
\]
Thus, $K$ is $2$-maximal in~$G$ and $|G:K|<|G:H|$,
this contradicts the choice of $H$.
Hence we conclude that $H$ is a strictly $2$-maximal subgroup of~$G$.
\end{proof}

\begin{lemma}\label{lem110}
Let $H$ be a $2$-maximal subgroup of a group~$G$,
$H\lessdot \, M\lessdot \,G$. If the indices $|G:M|$ and~$|M:H|$
are primes, then $H$ is a strictly 2-maximal subgroup of~$G$.
In particular, if~$G$ is a supersoluble group,
then $\mbox {Max}_2(G)=\mbox {Max}_2^\star(G)$.
\end{lemma}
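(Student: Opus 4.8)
The plan is to show that the hypothesis forces $|G:H|$ to be a product of two primes, and then to exploit that any subgroup lying between $H$ and $G$ must split this product multiplicatively. First I would set $p=|G:M|$ and $q=|M:H|$, both prime by hypothesis, so that the indices lemma gives $|G:H|=|G:M|\,|M:H|=pq$. Since the chain $H\lessdot M\lessdot G$ is strict, $H$ is properly contained in the proper subgroup $M$, so $H$ is not itself maximal in $G$. Consequently, for every $K\in\mbox{Max}(G,H)$ we have $H<K<G$ with both containments proper.

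Next I would take an arbitrary $K\in\mbox{Max}(G,H)$ and apply the indices lemma to the tower $H<K<G$, obtaining $|G:K|\,|K:H|=|G:H|=pq$ with both factors strictly larger than $1$. Because $p$ and $q$ are primes, the only way to write $pq$ as a product of two integers exceeding $1$ is as a product of two primes ($p$ and $q$ in some order, or $p$ twice when $p=q$); in particular $|K:H|$ is prime. A subgroup of prime index is maximal: any $L$ with $H\le L\le K$ satisfies $|K:H|=|K:L|\,|L:H|$, which forces $L=H$ or $L=K$, so $H\lessdot K$. As $K$ was an arbitrary element of $\mbox{Max}(G,H)$, this shows that $H$ is a strictly $2$-maximal subgroup of $G$.

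For the supersoluble case I would invoke two standard facts: every maximal subgroup of a supersoluble group has prime index, and every subgroup of a supersoluble group is again supersoluble. Given any $2$-maximal subgroup $H$ of a supersoluble group $G$, fix a chain $H\lessdot M\lessdot G$. Then $|G:M|$ is prime since $M$ is maximal in $G$, while $M$ is supersoluble and $H$ is maximal in $M$, so $|M:H|$ is also prime. The first part of the lemma then yields that $H$ is strictly $2$-maximal. Since this holds for every $2$-maximal subgroup, and strictly $2$-maximal subgroups are always $2$-maximal, we conclude $\mbox{Max}_2(G)=\mbox{Max}_2^\star(G)$.

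I do not anticipate a serious obstacle: the argument is essentially the multiplicativity of indices combined with the primality of the two relevant indices. The only points requiring care are confirming that $H$ can never coincide with a maximal $K$ (handled by the strictness of the original chain, which shows $H$ is non-maximal in $G$) and, for the second part, correctly citing the prime-index property of maximal subgroups of supersoluble groups together with the fact that subgroups inherit supersolubility.
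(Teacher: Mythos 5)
Your proof is correct and takes essentially the same route as the paper's: both arguments rest on the indices lemma and the observation that $|G:H|=pq$ cannot be factored into three integers exceeding $1$, with the supersoluble case reduced to the first part via Huppert's theorem that maximal subgroups of supersoluble groups have prime index. The only difference is presentational: you argue directly that $|K:H|$ must be prime for every $K\in\mbox{Max}(G,H)$, whereas the paper argues by contradiction from a chain $H\lessdot L\lessdot K<G$.
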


\begin{proof}
Assume that $H$ is a $2$-maximal subgroup of~$G$,
$H\lessdot \, M\lessdot \,G$, and the indices~$|G:M|$ and~$|M:H|$
are primes. Suppose that $H$ is not a strictly 2-maximal
subgroup of~$G$. Hence there is a  subgroup~$K$ of~$G$
such that~$H<K<G$ and~$H$ is 2-maximal in~$K$.
Therefore there is a subgroup~$L$ such that~$H\lessdot \,L\lessdot \,K<G$.
By the indices lemma,
\[
|G:H|=|G:K||K:L||L:H|, \ |G:K|\ne 1, \ |K:L|\ne 1, \ |L:H|\ne 1,
\]
so $|G:H|$ is divided by three primes, a contradiction.
Consequently, $H$ is a strictly $2$-maximal subgroup of~$G$.

Let $H\lessdot \,M\lessdot \,G$ and let $G$ be a supersoluble group.
By the Huppert Theorem~\cite[VI.9.5]{hup}, $|G:H|$ is divided by
exactly two not necessarily different primes.
If $H<X\lessdot \,G$, then $|X:H|$ is a prime
and~$H$ is a  maximal subgroup in~$X$.
Since~$X$ is an arbitrary maximal subgroup of~$G$ containing $H$,
we obtain that~$H$ is a strictly 2-maximal subgroup of~$G$.
\end{proof}

We give examples nonsupersoluble groups with
$\mbox {Max}_2(G)=\mbox {Max}_2^\star(G)$.
In examples, we based on~\cite{gap,cl,atlas,GN}
and build a graph for each group,
whose vertices are representatives of the classes
of conjugate subgroups and two vertices $A$ and $B$
are joined by an edge whenever $B\lessdot A$,
at that $B$ is located below $A$.
We follow the notation of~\cite{atlas}.
Besides, $C_q$ denotes a cyclic group of order $q$,
$G_q^n$ denotes a direct product of $n$ copies of $C_q$.

\begin{example}[{\cite[remak 4]{mg}}]\label{ef9}
$C_3^2: C_8$~(\cite[IdGroup(72,39)]{gap}, \cite{GN})

\begin{center}
\begin{tikzpicture}
[level distance=10mm, draw=white,
level 1/.style={sibling distance=6em, level distance=10mm},
level 2/.style={sibling distance=4em, level distance=10mm},
level 3/.style={sibling distance=4em, level distance=10mm},
level 4/.style={level distance=10mm},
]
\node(0) {$C_3^2: C_8$}
    child{node(11) {$C_3^2: C_4$}
          child{node(21) {$C_3: S_3$}
                child[missing]{}
                child{node(31) {$C_3^2$}}
                child{node(32) {$S_3$}
                    child{node(41) {$C_3$}
                        child[missing]{}
                        child{node(5) {$1$}}}
                    child{node(42) {$C_2$}}}}}
    child{node(12) {$C_8$}
          child{node(22) {$C_4$}}};
    \draw[gray] (0)--(11) (0)--(12);
    \draw[gray] (11)--(21) (11)--(22) (12)--(22);
    \draw[gray] (21)--(31) (21)--(32);
    \draw[gray] (31)--(41) (32)--(41) (32)--(42) (22)--(42);
    \draw[gray] (41)--(5) (42)--(5);
\end{tikzpicture}
\end{center}

$\mbox {Max}(C_3^2: C_8)= \{C_8,C_3^2: C_4\}$,

$\mbox {Max}_2(C_3^2: C_8)=\{C_4,C_3: S_3\}$,

$\mbox {Max}^\star _2(C_3^2: C_8)=\mbox
{Max}_2(C_3^2: C_8)$.
\end{example}

\begin{example}\label{epsu32}
$U_3(2)$~(\cite[IdGroup(72,41)]{gap}, \cite{GN})

\begin{center}
\begin{tikzpicture}
[level distance=10mm, draw=white,
level 1/.style={sibling distance=5em, level distance=10mm},
level 2/.style={sibling distance=3em, level distance=14mm},
level 3/.style={sibling distance=3em, level distance=10mm},
level 4/.style={level distance=10mm},
level 5/.style={level distance=10mm},
]
\node {$U_3(2)$}
    child{node(11) {$C_3^2: C_4$}
          child{node(21) {$C_3: S_3$}
                child[missing]{}
                child{node(31) {$C_3^2$}}
                child{node(32) {$S_3$}
                    child[missing]{}
                    child{node(41) {$C_3$}}
                    child{node(42) {$C_2$}
                        child{node(5) {$1$}}}}}
          child[missing]{}
          child{node(22) {$C_4$}}}
    child{node(12) {$C_3^2: C_4$}
          child[missing]{}
          child[missing]{}
          child{node(23) {$C_4$}}}
    child{node(13) {$C_3^2: C_4$}
          child[missing]{}
          child[missing]{}
          child{node(24) {$C_4$}}}
    child{node(14) {$Q_8$}};

    \draw[gray] (0)--(11) (0)--(12) (0)--(13) (0)--(14);
    \draw[gray] (11)--(21) (11)--(22) (12)--(21) (12)--(23) (13)--(21) (13)--(24) (14)--(22) (14)--(23) (14)--(24);
    \draw[gray] (21)--(31) (21)--(32) (22)--(42) (23)--(42) (24)--(42);
    \draw[gray] (31)--(41) (32)--(41) (32)--(42);
    \draw[gray] (41)--(5) (42)--(5);
\end{tikzpicture}
\end{center}

$\mbox {Max}(U_3(2))= \{Q_8,C_3^2: C_4\}$,

$\mbox {Max}_2(U_3(2))=\{C_4,C_3: S_3\}$,

$\mbox {Max}^\star _2(U_3(2))=\mbox {Max}_2(U_3(2))$.
\end{example}

\begin{example}\label{el217}  
$L_2(17)$ (\cite{gap},\cite[p.~9]{atlas},\cite{cl})

\begin{center}
\begin{tikzpicture}
[level distance=10mm, draw=gray,
level 1/.style={sibling distance=7em, level distance=10mm},
level 2/.style={sibling distance=4em, level distance=12mm},
level 3/.style={sibling distance=4em, level distance=14mm},
level 4/.style={level distance=10mm},
level 5/.style={level distance=10mm},
]
\node {$L_2(17)$}
    child{node(11) {$C_{17}: C_8$}
        child{node(21) {$C_{17}: C_4$}
            child{node(31) {$D_{34}$}
                child[missing]{}
                child{node(41) {$C_{17}$}}  }   }   }
    child{node(12) {$D_{16}$}
        child{node(22) {$C_8$}
            child{node(32) {$C_4$}}    }
        child[missing]{}   }
    child{node(13) {$S_4$}
        child{node(23) {$D_8$}}
        child{node(24) {$A_4$}
            child{node(33) {$C_2^2$} }
            child[missing]{}    }
        child[missing]{}    }
    child{node(14) {$S_4$}
        child{node(25) {$D_8$}}
        child{node(26) {$A_4$}
            child{node(34) {$C_2^2$}
                child{node(42) {$C_2$}
                    child{node(5) {$1$}}    }  }
            child[missing]{}    }
        child{node(27) {$S_3$}} }
    child{node(15) {$D_{18}$}
        child{node(28) {$C_9$}
            child{node(35) {$C_3$}}
            child[missing]{}    }    };

   \draw (11)--(22) (12)--(23) (12)--(25) (13)--(27) (15)--(27);
   \draw (21)--(32) (23)--(32) (25)--(32) (23)--(33) (24)--(35) (25)--(34) (26)--(35) (27)--(35);
   \draw (31)--(42) (32)--(42) (33)--(42) (27)--(42);
   \draw (41)--(5) (42)--(5) (35)--(5);
\end{tikzpicture}
\end{center}

$\mbox {Max}(L_2(17))= \{C_{17}: C_8, S_4, D_{18}, D_{16}\}$,

$\mbox {Max}_2(L_2(17))=\{C_{17}: C_4,C_8,A_4,D_8,S_3,C_9\}=
\mbox {Max}^\star_2(L_2(17))$.
\end{example}

\begin{example}\label{eu33} 

For $U_3(3)$ \cite[p.~9]{atlas} it follows from \cite{gap,cl} that

$\mbox {Max}(U_3(3))= \{(C_3^2: C_3): C_8, (C_4^2: C_3): C_2, SL_2(3): C_4, L_3(2)\}$,

$\mbox {Max}_2(U_3(3))=\{(C_3^2: C_3): C_4,
((C_4\times C_2): C_2): C_3,
C_4^2: C_2, C_4^2: C_3, C_7: C_3, C_3: C_8, S_4\}=\mbox {Max}^\star_2(U_3(3))$.
\end{example}

\section{Some corollaries of the results of Hanguang Meng
and Xiuyun Guo}

The following theorem compiles two results of ~\cite{mg}
with some additions. We give the full proof of all statements
since new information about strictly $2$-maximal subgroups
follows from them.

\begin{theorem}\label{t1}
Let $G$ be a group and $K<G$.
Suppose that there are two maximal subgroups~$M$ and~$H$ of $G$
such that~$K\le M\cap H$, $K$ is maximal in~$M$ and~$K$ is not maximal in~$H$.
The following statements hold.

$(1)$~$K=M\cap H$;

$(2)$~$K_G=M_G<M$;

$(3)$~either $K_G=H_G$ or $KH_G=H$;

$(4)$ if $G$ is soluble, $K\le X\lessdot \,G$
and $K$ is not maximal in~$X$, then $X=H$, $K_G=M_G<H_G$  and~$KH_G=H$.
\end{theorem}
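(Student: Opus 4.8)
The plan is to prove (1)--(3) with one elementary device and then build (4) on top of them through the structure of the soluble quotient $G/K_G$. Throughout I would use repeatedly the dichotomy that for $N\lhd G$ and a maximal subgroup $P$ one has either $N\le P$ or $NP=G$, combined with Dedekind's modular law. For (1): since $K\le M\cap H\le M$ and $K\lessdot M$, either $K=M\cap H$ or $M\cap H=M$; the latter gives $M\le H$, hence $M=H$ (both maximal), so $K$ would be maximal in $H$, against the hypothesis. For (3): apply the dichotomy to $N=H_G$, $P=M$. If $H_G\le M$ then $H_G\le M\cap H=K$, so $H_G\le K_G\le H_G$ and $K_G=H_G$; if $H_G\not\le M$ then $MH_G=G$ and the modular law gives $H=H\cap MH_G=(H\cap M)H_G=KH_G$. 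For (2) I would first show $M_G<M$: if $M=M_G\lhd G$ then $M$ is a normal maximal subgroup, so $G/M$ has only the trivial subgroups and is of prime order; since $H\not\le M$ forces $MH=G$ and $K=M\cap H\lhd H$, this makes $K$ maximal in $H$, a contradiction. Then $K_G\le M_G$ is automatic, and for the reverse I apply the dichotomy to $N=M_G$, $P=H$: if $M_G\le H$ then $M_G\le M\cap H=K$ and $M_G\le K_G$, giving $K_G=M_G$. The one genuinely delicate point is to exclude the case $M_G\not\le H$.

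To exclude it I would argue in $\bar G=G/M_G$. There $M_GH=G$ forces $MH=G$ and $M=M_G(M\cap H)=M_GK$, so $\bar M=\bar K$ while $\bar H=\bar G$. Since $K$ is not maximal in $H$, pick $K<L<H$; then $\bar M=\bar K\le\bar L\le\bar H=\bar G$ with $\bar M$ maximal in $\bar G$, so $\bar L=\bar M$ or $\bar L=\bar G$. If $\bar L=\bar M$ then $LM_G=M$, hence $L\le M$ and $L\le M\cap H=K$, contradicting $K<L$. If $\bar L=\bar G$ then $LM_G=G=HM_G$; as $H\cap M_G\le M\cap H=K\le L$, counting orders through $M_G$ gives $|L|=|H|$, i.e. $L=H$, again a contradiction. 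Hence $M_G\le H$ and $K_G=M_G$.

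For (4) the plan is to reduce modulo the core. Applying (1)--(3) to the pair $(M,X)$ (legitimate, since $K\lessdot M$, $K\le M\cap X$ and $K$ is not maximal in $X$) yields $K=M\cap X$ and $K_G=M_G$, the same core as for $(M,H)$. Passing to $\bar G=G/K_G$, the subgroup $\bar M$ is a core-free maximal subgroup of the soluble group $\bar G$; by the standard structure of soluble primitive groups $\bar G$ then has a unique minimal normal subgroup $\bar N$, which is self-centralizing and elementary abelian, $\bar M$ is a complement to $\bar N$, and $\bar K\lessdot\bar M$ is core-free with $\bar K\cap\bar N=1$. The heart of the argument is the claim that every complement $\bar Y$ to $\bar N$ containing $\bar K$ has $\bar K\lessdot\bar Y$: indeed the projection $\bar G\to\bar G/\bar N$ restricts to an isomorphism $\bar Y\to\bar G/\bar N$ carrying $\bar K$ onto $\bar K\bar N/\bar N$, which under $\bar G/\bar N\cong\bar M$ is exactly $\bar K$, so it maps the interval $[\bar K,\bar Y]$ isomorphically onto $[\bar K,\bar M]=\{\bar K,\bar M\}$.

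Since $\bar K$ is not maximal in $\bar H$ nor in $\bar X$, this claim shows that neither $\bar H$ nor $\bar X$ is core-free; as $\bar N$ is the unique minimal normal subgroup, $\bar N\le\bar H$ and $\bar N\le\bar X$. Then $\bar H/\bar N$ and $\bar X/\bar N$ are maximal subgroups of $\bar G/\bar N$ both containing the maximal subgroup $\bar K\bar N/\bar N$, which forces $\bar H=\bar X=\bar N\bar K$ and hence $X=H$. Writing $N$ for the preimage of $\bar N$, we obtain $K_G<N\le H_G$, so $K_G=M_G<H_G$, while $H=NK=H_GK$ gives $KH_G=H$, as required. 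The main obstacle is concentrated in (4): establishing the displayed claim about complements and deducing that a maximal subgroup in which $K$ fails to be maximal must contain $\bar N$. This is precisely the step where solubility is indispensable, entering through the primitive structure of $G/K_G$ (a single self-centralizing minimal normal subgroup complemented by the core-free maximal subgroup $\bar M$).
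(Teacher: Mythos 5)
Your proof is correct, and while it follows the same overall architecture as the paper's, several steps are carried out by genuinely different and mostly more elementary means. For (2) the paper reduces to $K_G=1$, picks a minimal normal subgroup $N\le M_G$ and uses the isomorphism $K/(K\cap N)\cong KN/N\lessdot\,HN/N\cong H/(H\cap N)$ to force $K\lessdot\,H$; you instead work with $M_G$ itself, obtain $M=M_GK$ from Dedekind's law, and eliminate any intermediate $K<L<H$ by the order count $|L\cap M_G|=|H\cap M_G|$ --- no minimal normal subgroup and no quotient computation are needed. The divergence is largest in (3): the paper takes a minimal normal subgroup $N/K_G\le H_G/K_G$ and proves that $NK$ is maximal in $G$, whereas you simply apply the dichotomy ``$H_G\le M$ or $MH_G=G$'' together with $H\cap MH_G=(H\cap M)H_G=KH_G$; your version is shorter and, unlike the paper's, does not depend on (2). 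In (4) both arguments rest on the same key fact --- in the primitive soluble quotient, a core-free maximal subgroup over $\bar K$ complements the socle $\bar N$ and then $\bar K$ is forced to be maximal in it (the paper realizes this as the contradiction $NK<NT\lessdot\,G$, you as an isomorphism of the intervals $[\bar K,\bar Y]$ and $[\bar K,\bar M]$) --- but your formulation lets you conclude directly that $\bar H=\bar X=\bar N\bar K$, which delivers $X=H$, $K_G<H_G$ and $KH_G=H$ in one stroke, where the paper splits into the cases $N\le H\cap X$ versus $N\not\le H$ and finishes with a separate conjugacy argument invoking (3). The only cost is that your route leans slightly more on the standard structure theory of soluble primitive groups (every core-free maximal subgroup complements the unique minimal normal subgroup), a fact the paper also uses.
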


\begin{proof}
(1) Since~$K\le (H\cap M)<M$ and~$K$ is maximal in~$M$, then~$K=H\cap M$.

(2) This statement is Lemma~1~\cite{mg}.
We can assume that $K_G=1$ and $M_G\ne 1$.
Choose a minimal normal subgroup $N$ of $G$
such that $N\le M_G$. Since $K_G=1$,
we get~$N\not \le K$ and~$KN=M$.
In view of~(1), $K=(H\cap M)$,
therefore~$H\cap N\le K$ and~$H\cap N=K\cap N$.
From $K<H\ne M$ and $KN=M$, we conclude that~$N\not \le H$ and~$G=HN$.
The quotient group $KN/N=M/N$ is maximal in $G/N=HN/N$,
consequently,
\[
K/(H\cap N)=K/(K\cap N)\cong  KN/N\lessdot \,G/N=HN/N\cong
H/(H\cap N).
\]
Hence~$K$ is maximal in~$H$, a contradiction, and~$M_G=1=K_G$.
Since~$K<M$, it follows from~$M_G=K_G$ that $M_G\ne M$.

(3) Since $K<H$, we get~$K_G\le H_G$.
Suppose that $K_G<H_G$ and $N/K_G$ is a minimal normal subgroup of $G/K_G$,
$N/K_G\le H_G/K_G$. In view of~(2), $M_G=K_G$, therefore
$N\not \le M$ and~$G=NM$. Since~$NK\le H$ and~$K$ is maximal in $M$,
we conclude that $NK$ is a maximal subgroup of~$G$
and $H=NK=H_GK$.

(4) This statement is Theorem~B~\cite{mg}.
We can assume that $K_G=1$. By the hypothesis~$K\lessdot \,M\lessdot \,G$.
Suppose that there are two subgroups $H\in \mbox {Max} (G,K)$
and~$X\in \mbox {Max} (G,K)$ such that~$K\le H\cap X$, $H\ne X$,
$K$ is not maximal in~$H$ and~$K$ is not maximal in~$X$.
In view of~(2), $M_G=1$, $K=M\cap H=M\cap X$.
Since~$G$ is a soluble primitive group, we obtain
\[
G=N: M, \ N=F(G), \ \Phi  (G)=1,
\]
$N$ is the unique minimal normal subgroup of~$G$.
If $N\le H\cap X$, then
\[
H=H\cap (NM)=N(H\cap M)=N(X\cap M)=X\cap (NM)=X,
\]
a contradiction. Hence~$N\not \le H$ or~$N\not \le X$.
Suppose that~$N\not \le H$ to fix an idea.
So~$G=N: H$. From~$K\lessdot \,M\lessdot \,G$,
it follows that~$NK\lessdot \,G$.
By the hypothesis, $K$ is not maximal in~$H$,
therefore there is a subgroup~$T$ such that~$K<T\lessdot \,H$.
Now $NK<NT\lessdot \,G$, a contradiction.
Hence we conclude that~$H=X$.

If $K_G=H_G$, then~$K$ and~$H$ are conjugate.
Since~$K_G=M_G$ and~$K/M_G\lessdot \,M/M_G$,
we get~$K/H_G\lessdot \,H/H_G$,
and~$K$ is maximal in~$H$, a contradiction.
Therefore $K_G\ne H_G$, and it follows from~(3)
that $KH_G=H$.
\end{proof}

\begin{corollary}\label{c1}
Let $G$ be a group,  $K\lessdot \,M\lessdot \,G$.
If~$K_G\ne M_G$, then~$K$ is a strictly $2$-maximal subgroup of $G$.
In particular, if a maximal subgroup~$M$ of~$G$ is normal in~$G$,
then every maximal subgroup of~$M$ is a strictly $2$-maximal subgroup of~$G$.
\end{corollary}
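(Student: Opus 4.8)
The plan is to deduce everything directly from Theorem~\ref{t1}(2), which forbids exactly the configuration that would witness a failure of strict $2$-maximality. Observe first that the hypothesis $K\lessdot\,M\lessdot\,G$ already makes $K$ a $2$-maximal subgroup of $G$ with $M\in\mbox{Max}(G,K)$, so in particular $\mbox{Max}(G,K)\neq\varnothing$. By the definition of a strictly $2$-maximal subgroup, it therefore suffices to show that $K\lessdot\,H$ for every $H\in\mbox{Max}(G,K)$.

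So I fix an arbitrary $H\in\mbox{Max}(G,K)$. If $H=M$ there is nothing to prove, since $K\lessdot\,M$ by hypothesis. Suppose then that $H\neq M$ and, aiming at a contradiction, that $K$ is not maximal in~$H$. Then $M$ and $H$ are two maximal subgroups of~$G$ with $K\le M\cap H$ (because $K\le M$ and $K\le H$), with $K$ maximal in~$M$ and $K$ not maximal in~$H$. These are precisely the hypotheses of Theorem~\ref{t1}, so statement~(2) gives $K_G=M_G$, contradicting the assumption $K_G\neq M_G$. Hence $K$ must be maximal in~$H$, and since $H$ was arbitrary, $K$ is a strictly $2$-maximal subgroup of~$G$.

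For the final assertion, suppose $M\lessdot\,G$ with $M\trianglelefteq G$, so that $M_G=M$, and let $K$ be any maximal subgroup of~$M$. Then $K\lessdot\,M\lessdot\,G$, so $K$ is $2$-maximal in~$G$, and from $K_G\le K<M=M_G$ we obtain $K_G\neq M_G$. Applying the first part yields that $K$ is strictly $2$-maximal in~$G$.

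I expect no genuine obstacle here: the entire content is packaged into Theorem~\ref{t1}(2), and the only points requiring care are the trivial case $H=M$ and the routine verification that the three hypotheses of Theorem~\ref{t1} (namely $K\le M\cap H$, that $K$ is maximal in~$M$, and that $K$ is not maximal in~$H$) are in force before invoking it. The core-theoretic inequality $K_G\le K<M_G$ in the ``in particular'' clause is immediate once one notes that a normal maximal subgroup equals its own core.
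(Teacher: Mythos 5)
Your proof is correct and is exactly the intended deduction: the paper states this as an immediate corollary of Theorem~\ref{t1} without writing out a proof, and your argument---ruling out any $H\in\mbox{Max}(G,K)$ with $K$ not maximal in $H$ by invoking Theorem~\ref{t1}(2) to force $K_G=M_G$, plus the observation $K_G\le K<M=M_G$ for the normal case---is precisely the reasoning the authors rely on.
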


\begin{note}
Let~$G$ be a soluble group and let $M$ be a maximal subgroup in $G$
of least index. According to~\textup{\cite[Lemma 1]{mt}},
$M$ is normal in~$G$, and in view of Corollary~\ref{c1},
all maximal subgroups of~$M$ are strictly $2$-maximal subgroups of~$G$.
In insoluble groups, it is not true.
\end{note}

\begin{example}\label{ea6}

In~$A_6$ (\cite[IdGroup(360,118)]{gap}, \cite{GN}),
a maximal subgroup~$A_5$ has the least index
and $S_3$ is a maximal subgroup of ~$A_5$.
Since
\[
S_3\lessdot C_3: S_3\lessdot C_3^2: C_4\lessdot G,
\]
$S_3$ is a $3$-maximal subgroup of~$A_6$.
Hence~$S_3$ is not a strictly $2$-maximal subgroup of~$A_6$.
\end{example}


\begin{corollary}\label{c2}
Let $G$ be a soluble group and let~$K$ be a $2$-maximal subgroup of $G$.
If $K$ is not a strictly $2$-maximal subgroup of~$G$,
then there is a unique maximal subgroup~$V$ of~$G$
such that~$K<V$ and~$K$ is not maximal in~$V$.
\end{corollary}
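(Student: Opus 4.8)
The plan is to reduce the statement to Theorem~\ref{t1}(4), whose conclusion already encodes the required uniqueness; the corollary is essentially a repackaging of that statement once the roles of the subgroups are set up correctly.

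First I would record what the two hypotheses provide. Since $K$ is a $2$-maximal subgroup of $G$, by definition there is $M\in\mbox{Max}(G,K)$ with $K\lessdot M\lessdot G$, so $G$ possesses a maximal subgroup $M$ in which $K$ \emph{is} maximal. Since $K$ is not strictly $2$-maximal, the negation of that definition gives a subgroup $V\in\mbox{Max}(G,K)$ in which $K$ is \emph{not} maximal. This already settles the existence of such a $V$; only its uniqueness remains to be proved. Note $V\ne M$ automatically, since $K$ is maximal in $M$ but not in $V$.

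Next I would verify that the pair $(M,V)$ meets the hypotheses of Theorem~\ref{t1}, taking $M$ as the maximal subgroup in which $K$ is maximal and $H=V$ as the one in which $K$ is not maximal. Indeed $K\le M\cap V$, $K$ is maximal in $M$, and $K$ is not maximal in $V$, exactly as the theorem requires. As $G$ is soluble, statement~(4) of the theorem becomes available for this configuration. For uniqueness I would then take an arbitrary maximal subgroup $W$ of $G$ with $K<W$ in which $K$ is not maximal, and apply Theorem~\ref{t1}(4) with $X=W$. Because $K\le W\lessdot G$ and $K$ is not maximal in $W$, the conclusion $X=H$ of~(4) forces $W=V$. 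Hence $V$ is the unique maximal subgroup of $G$ containing $K$ in which $K$ fails to be maximal.

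I do not expect a genuine obstacle here, since the proof of Theorem~\ref{t1}(4) already establishes that any two maximal overgroups of $K$ in which $K$ is non-maximal must coincide. The only point requiring care is the bookkeeping of roles: one must make sure that the $2$-maximality of $K$ supplies the subgroup $M$ demanded by the theorem (where $K$ is maximal), while the failure of strict $2$-maximality supplies the subgroup $V=H$ (where $K$ is not maximal), so that the hypotheses of Theorem~\ref{t1} are literally satisfied before part~(4) is invoked.
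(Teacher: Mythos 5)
Your proof is correct and follows exactly the route the paper intends: the corollary is an immediate consequence of Theorem~\ref{t1}(4), with existence of $V$ coming from the failure of strict $2$-maximality and uniqueness from the conclusion $X=H$ of part~(4). The paper gives no separate proof for this corollary, and your role-assignment of $M$ (from $2$-maximality) and $H=V$ (from non-strictness) is precisely the required bookkeeping.
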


\begin{example}\label{el227}
In $L_2(3^3)$ (\cite[p.~18]{atlas},\cite{cl}), there is a maximal subgroup~$M\cong D_{26}$.
A subgroup~$K$ of order~2 from $M$ is a $2$-maximal subgroup
of~$L_2(3^3)$. In~$L_2(3^3)$, there are maximal subgroups~$H\cong D_{28}$
and~$U\cong A_4$. Since Sylow $2$-subgroups of~$L_2(3^3)$ are of order~4
and conjugate, we can assume that~$K\le H\cap U$.
As~$K$ is $2$-maximal in~$H$ and in~$U$, we have~$K$ is
a $3$-maximal subgroup of~$L_2(3^3)$.
Consequently, the condition of group solubility
could not be removed in Corollary~\ref{c2}.
\end{example}

\begin{lemma}\label{l4}
Let $H$ be a subgroup of a $p$-soluble group $G$ and~$|G:H|=p$.
Then $G/H_G$ is supersoluble.
\end{lemma}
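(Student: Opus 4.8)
The plan is to pass to the quotient $\bar G = G/H_G$ and assume from the outset that $H_G = 1$; it then suffices to show that a $p$-soluble group $G$ with a \emph{core-free} maximal subgroup $H$ of prime index $p$ is supersoluble. Here $H$ is maximal because its index is prime (any intermediate subgroup would make $|G:H|$ composite by the indices lemma), and $G$ remains $p$-soluble since $p$-solubility passes to quotients; also $G \neq 1$ because $H < G$. The whole strategy is to produce inside $G$ a normal subgroup of order exactly $p$ that is self-centralizing, and then read off supersolubility.

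First I would show $O_{p'}(G) = 1$. The product $O_{p'}(G)H$ is a subgroup containing the maximal subgroup $H$, so it equals $H$ or $G$; if it equalled $G$ then $|G:H| = |O_{p'}(G) : O_{p'}(G)\cap H|$ would be coprime to $p$, contradicting $|G:H| = p$. Hence $O_{p'}(G) \le H$, and being normal it lies in $H_G = 1$. With $O_{p'}(G) = 1$ and $G \neq 1$, the standard fact for $p$-soluble groups that $C_G(O_p(G)) \le O_p(G)$ when $O_{p'}(G)=1$ (the Hall–Higman Lemma~1.2.3) applies; in particular $O_p(G)\neq 1$, since otherwise $C_G(O_p(G)) = G \le O_p(G) = 1$.

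Next I would pin down $O_p(G)\cong C_p$, which I expect to be the crux of the argument. Because $O_p(G)$ is normal and nontrivial, it is not contained in $H$ (else $O_p(G)\le H_G = 1$), so maximality gives $O_p(G)H = G$ and therefore $|O_p(G) : O_p(G)\cap H| = p$. The intersection $P_0 = O_p(G)\cap H$ has index $p$ in the $p$-group $O_p(G)$, hence is maximal and so normal in $O_p(G)$; it is also normalized by $H$ (which stabilizes both $O_p(G)$ and $H$). Thus $P_0$ is normalized by $\langle O_p(G),H\rangle = O_p(G)H = G$, so $P_0 \trianglelefteq G$ and $P_0 \le H$, forcing $P_0 \le H_G = 1$ and $|O_p(G)| = p$. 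The point to stress is that the bound $C_G(O_p(G)) \le O_p(G)$ by itself does not make $\operatorname{Aut}(O_p(G))$ cyclic; it is this core observation on $O_p(G)\cap H$ that rules out a larger normal $p$-subgroup.

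Finally, since $O_p(G)\cong C_p$ is abelian we get $C_G(O_p(G)) = O_p(G)$, so $G/O_p(G) = G/C_G(O_p(G))$ embeds into $\operatorname{Aut}(C_p)\cong C_{p-1}$ and is cyclic. Hence $G$ is metacyclic, with normal series $1 \triangleleft O_p(G) \triangleleft G$; refining the top factor through a composition series of the cyclic (so abelian) group $G/O_p(G)$ yields a chief series of $G$ all of whose factors are cyclic of prime order and normal in $G$, i.e.\ $G = G/H_G$ is supersoluble. (One could instead route the argument through Burnside's theorem on primitive permutation groups of prime degree, but the elementary core computation above is shorter and self-contained.)
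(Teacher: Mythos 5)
Your proof is correct and follows essentially the same route as the paper's: reduce to $H_G=1$, show $O_{p'}(G)=1$, invoke $C_G(O_p(G))\le O_p(G)$ for $p$-soluble groups, deduce $|O_p(G)|=p$ from $O_p(G)\cap H\trianglelefteq G$, and conclude via the embedding of $G/O_p(G)$ into $\operatorname{Aut}(C_p)$. You merely supply more detail than the paper at the two steps it states tersely (why $O_{p'}(G)\le H_G$ and why $H\cap O_p(G)\le H_G$).
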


\begin{proof}
We can assume $H_G=1$. Since $O_{p^\prime}(G)\le H_G=1$,
we obtain that $O_{p}(G)\ne 1$ and $C_G(O_{p}(G))\le O_{p}(G)$.
Since $H\cap O_{p}(G)\le H_G=1$, we have $G=O_p(G): H$,
$|O_{p}(G)|=p$ and $H$ is isomorphic to a subgroup
of a cyclic group of order $p-1$. Therefore $G$ is supersoluble.
\end{proof}

\begin{corollary}\label{p2}
Let $G$ be a $p$-soluble group, $M<G$. If $|G:M|=p$,
then every maximal subgroup in~$M$ is a strictly
$2$-maximal subgroup of $G$.  In particular,
in a soluble group, all maximal subgroups of
a subgroup of prime index are
strictly $2$-maximal subgroups of a group.
\end{corollary}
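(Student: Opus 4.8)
The plan is to reduce every case either to Corollary~\ref{c1} or to the supersoluble case of Lemma~\ref{lem110} via Lemma~\ref{l4}. First I would note that since $|G:M|=p$ is prime, $M$ is a maximal subgroup of $G$; hence for an arbitrary maximal subgroup $K$ of $M$ we have $K\lessdot\,M\lessdot\,G$, so $K$ is a $2$-maximal subgroup of $G$, and it remains only to show that $K$ is strictly $2$-maximal. Because $K\le M$, the cores satisfy $K_G\le M_G$, and I would split the argument according to whether this inclusion is strict.

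If $K_G\ne M_G$ (equivalently $K_G<M_G$), then Corollary~\ref{c1} applies verbatim to $K\lessdot\,M\lessdot\,G$ and yields at once that $K$ is strictly $2$-maximal in $G$, with no appeal to $p$-solubility. The substance therefore lies in the case $K_G=M_G$. Here $M_G=K_G\le K$, and by Lemma~\ref{l4} the quotient $\overline G=G/M_G$ is supersoluble. Since $M_G$ is normal in $G$ and $M_G\le K\le M$, the correspondence theorem gives $\overline K=K/M_G\lessdot\,\overline M=M/M_G\lessdot\,\overline G$, so $\overline K$ is a $2$-maximal subgroup of the supersoluble group $\overline G$; by the supersoluble case of Lemma~\ref{lem110} we have $\mbox{Max}_2(\overline G)=\mbox{Max}_2^\star(\overline G)$, whence $\overline K$ is strictly $2$-maximal in $\overline G$.

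The final step is to transfer strict $2$-maximality back from $\overline G$ to $G$. The key observation is that every $V\in\mbox{Max}(G,K)$ contains $M_G$, since $M_G=K_G\le K\le V$; hence $V\mapsto V/M_G$ is a bijection between $\mbox{Max}(G,K)$ and $\mbox{Max}(\overline G,\overline K)$ under which $K\lessdot\,V$ holds if and only if $\overline K\lessdot\,V/M_G$ holds. As $\overline K$ is strictly $2$-maximal in $\overline G$, it is maximal in every member of $\mbox{Max}(\overline G,\overline K)$, so $K$ is maximal in every $V\in\mbox{Max}(G,K)$, i.e. $K$ is strictly $2$-maximal in $G$. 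The concluding ``in particular'' assertion then follows because a soluble group is $p$-soluble for every prime $p$, so the main statement applies to any subgroup of prime index.

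I expect the main obstacle to be exactly the case $K_G=M_G$, where Corollary~\ref{c1} gives no information: one must exploit the fact that all maximal overgroups of $K$ share the common core $M_G$ in order to descend to the supersoluble quotient $G/M_G$ and then lift the conclusion back through the correspondence theorem. The remaining points (maximality of $M$, the bijection of maximal overgroups, and preservation of the covering relation $\lessdot$) are routine once this structure is in place.
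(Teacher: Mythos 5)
Your proof is correct and follows essentially the same route as the paper's: the paper argues by contradiction, using Theorem~\ref{t1}(2) to force $K_G=M_G$ and then Lemma~\ref{l4} to make $G/K_G$ supersoluble, which is exactly your second case, while your first case ($K_G\ne M_G$, settled by Corollary~\ref{c1}) is the contrapositive of that same step. Your version merely recasts the contradiction as a direct case split and spells out the quotient-correspondence argument that the paper leaves implicit.
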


\begin{proof}
Assume that the assertion is false.
Then there are a maximal subgroup~$K$ in $M$
and a maximal subgroup~$H$ in $G$ such that
$K<H$ and~$K$ is not maximal in~$H$.
According to Theorem~\ref{t1},
$M$ is not normal in~$G$ and~$K_G=M_G\le H_G$.
The quotient group~$G/K_G$ is supersoluble
by Lemma~\ref{l4}, therefore~$K$ is
a strictly $2$-maximal subgroup of~$G$,
a contradiction.
\end{proof}

\begin{note}
We do not know whether the requirement of group $p$-solubility
could be removed in Corollary~\ref{p2}.
\end{note}

\begin{corollary} 
Let $G$ be a group and $K\le G$. Suppose that there are
two maximal subgroups~$M$ and~$H$ in $G$ such that
$K$ is maximal in~$M$ and~$K$ is not maximal in~$H$.
If $K$ is subnormal in~$G$, then~$K=M_G$ and $G/K=H/K: M/K$
is a nonprimary nonsupersoluble group in which all proper subgroups
are primary.
\end{corollary}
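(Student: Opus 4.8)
The plan is to push the configuration through Theorem~\ref{t1}, use subnormality to force $K\triangleleft G$, and then read off the structure of $G/K$.

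First I would note that the hypotheses are precisely those of Theorem~\ref{t1} with $K\le M\cap H$, so I may invoke (1)--(3): $K=M\cap H$, $K_G=M_G<M$, and either $K_G=H_G$ or $KH_G=H$. Subnormality enters next. Since $K$ is subnormal in $G$ it is subnormal in $M$, and a subnormal maximal subgroup is normal, so $K\triangleleft M$ and $M\le N_G(K)$. If $N_G(K)\ne G$, maximality of $M$ forces $N_G(K)=M$; then in a strictly ascending subnormal chain $K=K_0\triangleleft K_1\triangleleft\dots\triangleleft K_n=G$ one has $K_1\le N_G(K)=M$ and $K<K_1$, so $K_1=M$ by maximality of $K$ in $M$, whence $M=K_1$ is subnormal and maximal, hence normal in $G$---contradicting $M_G<M$. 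Therefore $N_G(K)=G$, that is $K\triangleleft G$, so $K=K_G=M_G$, which is the first assertion.

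Now pass to $\overline G=G/K$, writing $\overline X=X/K$. Because $K$ is maximal in $M$, the group $\overline M$ has no nontrivial proper subgroup, so $\overline M\cong C_p$ for a prime $p$; and $M_G=K$ means $\overline M$ is core-free in $\overline G$. Fix a minimal normal subgroup $\overline N$ of $\overline G$. Core-freeness gives $\overline N\not\le\overline M$, hence $\overline N\,\overline M=\overline G$; and since $|\overline M|=p$ the intersection $\overline N\cap\overline M$ is $1$ or $\overline M$, the latter being impossible, as it would force $\overline G=\overline N$, a characteristically simple group, to possess a core-free maximal subgroup of prime order. Thus $\overline G=\overline N\rtimes\overline M$ and $|\overline G|=p\,|\overline N|$. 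The hardest step is to show $\overline N$ is elementary abelian. The subgroup $C_{\overline N}(\overline M)$ is $\overline M$-invariant, so $C_{\overline N}(\overline M)\,\overline M$ is a subgroup containing $\overline M$; by maximality it equals $\overline M$ or $\overline G$, i.e. $C_{\overline N}(\overline M)\in\{1,\overline N\}$, and the value $\overline N$ is excluded because it would make $\overline M$ central in $\overline G$, hence normal, against core-freeness. Thus $\overline M\cong C_p$ acts faithfully and fixed-point-freely on $\overline N$, and by Thompson's theorem on fixed-point-free automorphisms of prime order $\overline N$ is nilpotent; being minimal normal it is elementary abelian, say of order $q^m$, with $\overline M$ acting irreducibly. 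A nontrivial $p$-element cannot act without fixed points on a nontrivial $p$-group, so $p\ne q$ and $\overline G=C_q^{\,m}\rtimes C_p$.

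Finally I would identify $\overline H$. Any maximal subgroup of $\overline G$ either contains $\overline N$ and so equals $\overline N$, or meets $\overline N$ trivially (by irreducibility) and so has order $p$; hence the only maximal subgroup of non-prime order is $\overline N$. Since $K$ is not maximal in $H$, the subgroup $\overline H$ is maximal but not of prime order, so $\overline H=\overline N$ and $m\ge2$. This gives $G/K=H/K:M/K$; the chief factor $\overline N$ of order $q^m$ with $m\ge2$ shows $\overline G$ is nonsupersoluble, while $|\overline G|=q^m p$ with $p\ne q$ shows it is nonprimary. For the last claim, let $\overline L<\overline G$: if $p\nmid|\overline L|$ then $\overline L$ is a $q$-group; if $p\mid|\overline L|$, a Sylow $p$-subgroup of $\overline L$ has order $p$ and, being conjugate to $\overline M$, acts irreducibly, so $\overline L\cap\overline N\in\{1,\overline N\}$, and $\overline N\le\overline L$ would give $\overline L=\overline G$; hence $\overline L\cap\overline N=1$ and $|\overline L|=p$. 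In either case $\overline L$ is primary, which completes the proof. The real depth sits in the elementary-abelian step, resting on the fixed-point-free automorphism theorem; everything else is bookkeeping with Theorem~\ref{t1} and the normaliser of a subnormal subgroup.
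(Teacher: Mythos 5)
Your proof is correct, and it reaches the paper's conclusion by a genuinely different route in the key structural step. Where the paper merely asserts that subnormality of $K$ forces $K\trianglelefteq G$, you supply the actual argument: $K$ is subnormal and maximal in $M$, hence normal in $M$, so $M\le N_G(K)$; and $N_G(K)=M$ would push the first term of a subnormal chain above $K$ into $M$, making $M$ itself subnormal, hence normal, against $M_G<M$ from Theorem~\ref{t1}(2). The more substantive divergence is in determining the structure of $G/K$. The paper first proves $G/K$ is \emph{soluble}, citing Huppert~IV.7.4 for the self-normalizing maximal subgroup $M/K$ of prime order, and then reads off $H/K$ as the unique minimal normal (elementary abelian) subgroup of the resulting soluble primitive group. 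You bypass solubility altogether: you take an arbitrary minimal normal subgroup $\overline N$ of $\overline G=G/K$, show $\overline M$ complements it and acts on it faithfully and fixed-point-freely, and invoke Thompson's fixed-point-free automorphism theorem to get $\overline N$ nilpotent, hence elementary abelian. Both routes rest on a deep theorem of Thompson, so neither is more elementary, but yours is more self-contained in that it never needs the theory of soluble primitive groups. Two small points you should flesh out: the exclusion of $\overline M\le\overline N$ (which would make $\overline G$ simple with a self-normalizing Sylow subgroup of prime order; Burnside's normal $p$-complement theorem rules this out), and the observation that the generator of $\overline M$ really induces an automorphism of order exactly $p$ on $\overline N$ (it is nontrivial because $C_{\overline N}(\overline M)\ne\overline N$). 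The endgame --- identifying $\overline H=\overline N$ and checking that every proper subgroup of $\overline G$ is primary --- agrees with the paper's.
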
 

\begin{proof}
According to Theorem~\ref{t1}, $M$ is not normal in~$G$
and~$K_G=M_G\le H_G$. By the hypothesis,
$K$ is subnormal in~$G$, therefore $K$ is normal in~$G$ and~$K=M_G$.
Since~$K$ is maximal in~$M$, we get $|M/K|=p$ for a prime~$p$.
As~$N_{G/K}(M/K)=M/K$ we deduce $G/K$ is soluble~\cite[IV.7.4]{hup},
$H/K$ is normal in~$G/K$~\cite[II.3.2]{hup}
and~$H/K$ is a minimal normal subgroup.
Hence $H/K$ is a $q$-group for a prime $q\ne p$.
Since~$K$ is not maximal in~$H$, we have~$|H/K|>q$,
and~$G/K=H/K: M/K$ is a nonprimary nonsupersoluble group.
It is clear that all proper subgroups in~$G/K$ are primary.
\end{proof}

\end{document}